\documentclass[12pt,reqno]{article}

\usepackage[usenames]{color}
\usepackage[colorlinks=true,
linkcolor=webgreen, filecolor=webbrown,
citecolor=webgreen]{hyperref}

\definecolor{webgreen}{rgb}{0,.5,0}
\definecolor{webbrown}{rgb}{.6,0,0}

\usepackage{amssymb}
\usepackage{graphicx}
\usepackage{amscd}
\usepackage{lscape}

\usepackage{amsthm}
\newtheorem{theorem}{Theorem}

\newtheorem{proposition}[theorem]{Proposition}
\newtheorem{corollary}[theorem]{Corollary}

\theoremstyle{definition}

\newtheorem{example}[theorem]{Example}

\usepackage{float}

\usepackage{graphics,amsmath}
\usepackage{amsfonts}
\usepackage{latexsym}
\usepackage{epsf}

\setlength{\textwidth}{6.5in} \setlength{\oddsidemargin}{.1in}
\setlength{\evensidemargin}{.1in} \setlength{\topmargin}{-.5in}
\setlength{\textheight}{8.9in}

\newcommand{\seqnum}[1]{\href{http://oeis.org/#1}{\underline{#1}}}

\begin{document}

\begin{center}
\vskip 1cm{\LARGE\bf Riordan Pseudo-Involutions, Continued Fractions and Somos $4$ Sequences} \vskip 1cm \large
Paul Barry\\
School of Science\\
Waterford Institute of Technology\\
Ireland\\
\href{mailto:pbarry@wit.ie}{\tt pbarry@wit.ie}
\end{center}
\vskip .2 in

\begin{abstract} We define a three parameter family of Bell pseudo-involutions in the Riordan group. The defining sequences have generating functions that are expressible as continued fractions. We indicate that the Hankel transforms of the defining sequences, and of the $A$ sequences of the corresponding Riordan arrays, can be associated with Somos $4$ sequence. We give examples where these sequences can be associated with elliptic curves, and we exhibit instances where elliptic curves can give rise to associated Riordan pseudo-involutions. In the case of a particular one parameter family of elliptic curves, we show how we can associate to each such curve a unique Bell pseudo-involution.\end{abstract}

\section{Introduction}

The area of Riordan (pseudo) involutions has been the subject of much research in recent years \cite{PSI, B_Seq, Cons, Inv, Cohen, Luzon, Candice}. Recently, a sequence characterization of these involutions has emerged \cite{B_Seq}. This sequence is called the $\Delta$-sequence or the $B$-sequence. In this paper, we study a three parameter family of Riordan pseudo-involutions defined by a simply described $B$-sequence. We show that these pseudo-involutions are linked to Catalan defined generating functions, and are linked to Somos sequences and elliptic curves via the Hankel transforms of these generating functions. We show by example that it is possible to start with an appropriate elliptic curve and to derive from its equation an associated Riordan pseudo-involution.

The group of (ordinary) Riordan arrays \cite{Book, Survey, SGWW} is the set of lower triangular invertible matrices $(g(x), f(x))$ defined by two power series
$$g(x)=1+g_1 x+ g_2 x^2 + \cdots,$$  and
$$f(x)=f_1 x+ f_2 x^2 + \cdots,$$ such that the $(n,k$)-th element $t_{n,k}$ of the matrix is given by
$$t_{n,k}=[x^n] g(x)f(x)^k,$$ where $[x^n]$ is the functional that extracts the coefficient of $x^n$ from a power series.

The Fundamental Theorem of Riordan Arrays (FTRA) says that we have the law $$(g(x), f(x))\cdot h(x)=g(x)h(f(x)).$$

This is equivalent to the matrix represented by $(g(x), f(x))$ operating on the column vector whose elements are the expansion of the generating function $h(x)$. The resulting vector, regarded as a sequence, will have generating function $g(x)h(f(x))$.

The product of two Riordan arrays $(g(x),f(x))$ and $(u(x),v(x))$ is defined by
$$(g(x),f(x)) \cdot (u(x),v(x))=(g(x)u(f(x)), v(f(x))).$$
The inverse of the Riordan array $(g(x),f(x))$ is given by
$$(g(x), f(x))^{-1}=\left(\frac{1}{g(\bar{f}(x))}, \bar{f}(x)\right),$$ where
$\bar{f}(x)=\text{Rev}(f)(x)$ is the compositional inverse of $f(x)$. Thus $\bar{f}(x)$ is the solution $u(x)$ of the equation
$$f(u)=x$$ with $u(0)=0$.

The identity element is given by $(1,x)$ which as a matrix is the usual identity matrix.

With these operations the set of Riordan arrays form a group, called the Riordan group.

The Bell subgroup of Riordan arrays consists of those lower triangular invertible matrices defined by a power series
 $$g(x)=1+g_1 x+ g_2 x^2 + \cdots,$$ where the $(n,k$)-th element $t_{n,k}$ of the matrix is given by
 $$t_{n,k}=[x^n] g(x)(xg(x))^k = [x^{n-k}] g(x)^{k+1}.$$
A Bell pseudo-involution is a Bell array $(g(x), xg(x))$ such that the square of the Riordan array $(g(x),-xg(x))$ is the identity matrix. We shall call a generating function $g(x)$ for which this is true an \emph{involutory} generating function.

For a lower triangular invertible matrix $A$, the matrix $P_A=A^{-1}\bar{A}$ is called the production matrix of $A$, where $\bar{A}$ is the matrix $A$ with its first row removed. A matrix $A$ is a Riordan array if and only if $P_A$ takes the form
$$\left(
\begin{array}{cccccc}
 z_0 & a_0 & 0 & 0 & 0 & 0 \\
 z_1 & a_1 & a_0 & 0 & 0 & 0 \\
 z_2  & a_2  & a_1 & a_0 & 0 & 0 \\
 z_3  & a_3  & a_2 & a_1 & a_0 & 0 \\
 z_4  & a_4  & a_3  & a_2 & a_1 & a_0 \\
 z_5  & a_5  & a_4  & a_3 & a_2  &a_1 \\
\end{array}
\right).$$

The sequence that begins
$$z_0, z_1, z_2, z_3,\dots$$ is called the $Z$-sequence, while the sequence
$$a_0, a_1, a_2, a_3,\dots$$ is called the $A$-sequence. For a Riordan array $(g(x), f(x))$, we have
$$A(x)=\frac{x}{\bar{f}(x)} \quad \text{and}\quad Z(x)=\frac{1}{\bar{f}(x)}\left(1-\frac{1}{g(\bar{f}(x))}\right),$$
where $A(x)$ is the power series $a_0+a_1 x+ a_2 x^2 +\cdots$, and $Z(x)$ is the power series $z_0+z_1 x+ z_2 x+ \cdots$.

For a Riordan array $(g(x), f(x))$ to be an element of the Bell subgroup it is necessary and sufficient that
$$A(x)=1+xZ(x).$$
If $(g(x), x g(x))$ is a pseudo-involution, then we have that \cite{Cons}
$$A(x)=\frac{1}{g(-x)}.$$
We have the following result \cite{B_Seq}.
\begin{proposition} An element $(g(x), xg(x))$ of the Bell subgroup is a pseudo-involution if and only if there exists a sequence
$$b_0, b_1, b_2, \ldots$$ such that
$$t_{n+1,k}=t_{n,k-1}+\sum_{j \ge 0} b_j\cdot t_{n-j,k+j},$$ where
$d_{n,k-1}=0$ if $k=0$.
\end{proposition}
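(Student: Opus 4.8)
The plan is to convert both the $B$-sequence recurrence and the pseudo-involution property into functional equations for $g$ and then match them. Write the $k$-th column generating function as $C_k(x)=\sum_{n\ge 0}t_{n,k}x^n=x^k g(x)^{k+1}$ and set $B(t)=\sum_{j\ge 0}b_j t^j$. Multiplying the recurrence by $x^{n+1}$ and summing over $n\ge 0$, the left-hand side becomes $C_k(x)$ (for $k\ge 1$) or $g(x)-1$ (for $k=0$), the term $t_{n,k-1}$ contributes $xC_{k-1}(x)$, and the diagonal sum $\sum_j b_j t_{n-j,k+j}$ contributes $x^{k+1}g(x)^{k+1}B(x^2 g(x))$. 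After dividing by $x^k g(x)^k$, every value of $k$ (including $k=0$, using the convention $t_{n,-1}=0$) collapses to the single identity
$$g(x)=1+xg(x)\,B\!\left(x^2 g(x)\right).\qquad(\star)$$
Hence the recurrence holds for some sequence $(b_j)$ if and only if there is a power series $B$ satisfying $(\star)$. I would present this reduction first, as it is routine but is exactly the step that makes the sequence $(b_j)$ usable.

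Next I would record the pseudo-involution condition in closed form. Expanding $(g(x),-xg(x))^2$ with the Riordan product law gives first component $g(x)g(-xg(x))$ and second component $xg(x)g(-xg(x))$, so $(g,xg)$ is a pseudo-involution if and only if $g(x)g(-xg(x))=1$. Writing $\psi(x)=-xg(x)$, this says $g(\psi(x))=1/g(x)$, and it immediately forces $\psi$ to be a formal involution, $\psi(\psi(x))=x$, with $\psi(x)=-x+O(x^2)$. The proposition is thereby reduced to the equivalence between $(\star)$ (for some $B$) and $g(x)g(\psi(x))=1$.

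For the direction $(\star)\Rightarrow$ involution, I would substitute $y=\psi(x)$ into the identity $g(y)=1+yg(y)B(y^2 g(y))$ and set $w=g(\psi(x))$, obtaining $w=1+\psi\,w\,B(\psi^2 w)$. Because $\psi=O(x)$, this fixed-point equation has a unique power-series solution with $w(0)=1$: any two solutions that first disagreed at order $n$ would, after the protecting factor $\psi$, be forced to disagree only at order $>n$, a contradiction. It then suffices to check that $w=1/g(x)$ solves the equation; this works because $\psi(x)^2/g(x)=x^2g(x)$ is exactly the argument $u(x)$ of $B$, and $(\star)$ gives $B(u)=(g-1)/(xg)$, so the right-hand side simplifies to $1/g$. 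Uniqueness then yields $g(\psi(x))=1/g(x)$, i.e. the pseudo-involution property.

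For the converse, set $u(x)=x^2g(x)$ and $R(x)=(g(x)-1)/(xg(x))$; by $(\star)$ I must produce a power series $B$ with $R=B(u)$. Using $g(\psi)=1/g$ one checks directly that both $u$ and $R$ are invariant under $\psi$, that is $u\circ\psi=u$ and $R\circ\psi=R$. The coefficients $b_j$ are then extracted by a peeling recursion: a $\psi$-invariant series with $\psi(x)=-x+O(x^2)$ must have vanishing coefficient of $x^1$ (comparing linear terms in $S(x)=S(\psi(x))$ forces $2s_1=0$), so after subtracting its constant term it has order $\ge 2$ and is divisible by $u=x^2\cdot(\text{unit})$; the quotient is again $\psi$-invariant, and iterating reads off $b_0,b_1,b_2,\dots$ with $R=\sum_j b_j u^j$. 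I expect this last step -- recognising that $\psi$-invariance is precisely what lets one rewrite $R$ as a power series in $u$ -- to be the main obstacle, since it is where the involution hypothesis is genuinely converted into the existence of the sequence $(b_j)$, whereas the remaining manipulations are formal.
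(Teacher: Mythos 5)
Your proof is correct, and every step checks out: the column-generating-function reduction of the recurrence to the single functional equation $g(x)=1+xg(x)B\left(x^2g(x)\right)$, the reduction of the pseudo-involution condition to $g(x)\,g(-xg(x))=1$, the fixed-point uniqueness argument giving $g(\psi(x))=1/g(x)$, and the peeling argument in the converse. However, there is no proof in the paper to compare it against: the paper states this proposition without proof, quoting it from Cheon, Jin, Kim and Shapiro \cite{B_Seq}, so your argument stands as a self-contained substitute rather than a variant of the paper's own reasoning. Relative to what the paper does record, your equation $(\star)$ is exactly the identity obtained by combining the two relations the paper cites for pseudo-involutions, $A(x)=1/g(-x)$ and $A(x)=1+xB\bigl(x^2/A(x)\bigr)$ from \cite{Cons}; so your route rederives, rather than assumes, that $A$--$B$ relation. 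The part that carries real weight, and which you correctly flag as the crux, is the converse: the observations that $u(x)=x^2g(x)$ and $R(x)=(g(x)-1)/(xg(x))$ are both invariant under $\psi(x)=-xg(x)$, and that $\psi$-invariance forces a series to be a power series in $u$, are precisely what manufactures the sequence $(b_j)$. Your order-counting in both delicate spots is sound: in the uniqueness argument, $B(u_1)-B(u_2)$ is divisible by $u_1-u_2$ and the prefactor $\psi$ raises the order by one; in the peeling argument, the linear coefficient of a $\psi$-invariant series vanishes since $\psi'(0)=-1$, the remainder is divisible by $u=x^2\cdot(\text{unit})$, and each quotient inherits the invariance, so the expansion in powers of $u$ converges formally.
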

This sequence, when it exists, is called the $B$-sequence, or the $\Delta$-sequence, of the Riordan pseudo-involution.
The relationship between $A(x)$ and $B(x)$, when this latter exists, is given by \cite{Cons}
$$A(x)=1+xB\left(\frac{x^2}{A(x)}\right).$$

Sequences and triangles, where known, will be referenced by their $Annnnnn$ number in the On-Line Encyclopedia of Integer Sequences \cite{SL1, SL2}. All number triangles in this note are infinite in extent; where shown, a suitable truncation is used.
\section{A Bell pseudo-involution defined by continued fractions}
In this section, we consider the $B$-sequence with generating function given by
$$B(x)=\frac{a-cx}{1+bx}.$$
\begin{proposition} For the Bell pseudo-involution $(g(x), xg(x))$ with
$$B(x)=\frac{a-cx}{1+bx},$$ we have
$$A(x)=1+ax-\frac{x^3(ab+c)}{1+ax+bx^2} c\left(\frac{x^3(ab+c)}{(1+ax+bx^2)^2}\right),$$
and
$$g(x)=\frac{1}{1-ax-bx^2}c\left(\frac{-x^2(b+cx)}{(1-ax-bx^2)^2}\right),$$
where $$c(x)=\frac{1-\sqrt{1-4x}}{2x}$$ is the generating function of the Catalan numbers \seqnum{A000108}
$C_n=\frac{1}{n+1}\binom{2n}{n}$.
\end{proposition}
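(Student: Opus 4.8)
The plan is to derive both closed forms from the defining relation between $A(x)$ and $B(x)$ stated in the excerpt, namely
$$A(x)=1+xB\left(\frac{x^2}{A(x)}\right),$$
together with the pseudo-involution identity $A(x)=1/g(-x)$. Starting from $B(x)=\frac{a-cx}{1+bx}$, I would substitute $x\mapsto x^2/A(x)$ to obtain
$$A(x)=1+x\cdot\frac{a-c\,x^2/A(x)}{1+b\,x^2/A(x)}=1+x\cdot\frac{aA(x)-cx^2}{A(x)+bx^2}.$$
Clearing denominators turns this into a quadratic equation in $A(x)$ with coefficients that are polynomials in $x$. The key observation is that the Catalan generating function $c(u)=\frac{1-\sqrt{1-4u}}{2u}$ is exactly the tool for solving $W=1+uW^2$-type quadratics: any equation of the form $Y=P(x)+Q(x)Y^2$ has solution $Y=P(x)\,c\bigl(P(x)Q(x)\bigr)$. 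So the first main step is algebraic: rearrange the cleared quadratic into this canonical shape and read off the arguments.

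First I would carry out the cross-multiplication. Writing $A=A(x)$ for brevity, the relation becomes
$$A\,(A+bx^2)=(A+bx^2)+x\,(aA-cx^2),$$
i.e.
$$A^2+bx^2A=A+bx^2+axA-cx^3,$$
which I would reorganize as a quadratic $A^2-(1+ax-bx^2)A-(bx^2-cx^3)=0$. The goal is to massage this into the form $A=(1+ax+bx^2)+(\text{something})\cdot A^2$ or, more usefully, to isolate the deviation of $A$ from a rational "free term." I expect that after setting $A=(1+ax-bx^2)+R$ and substituting, the residual $R$ satisfies a pure quadratic $R=\lambda(x)+\mu(x)R^2$ whose solution is $R=\lambda\,c(\lambda\mu)$; matching this against the claimed answer with argument $\frac{x^3(ab+c)}{(1+ax+bx^2)^2}$ pins down $\lambda$ and $\mu$. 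The appearance of $(ab+c)$ and the denominator $(1+ax+bx^2)^2$ strongly suggests that the correct normalization divides through by $1+ax+bx^2$ (not $1-bx^2$), so I would track the algebra carefully to confirm that the discriminant of the quadratic factors as $(1+ax+bx^2)^2-4x^3(ab+c)$ up to the right scaling.

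For the generating function $g(x)$, I would use $g(x)=1/A(-x)$, which follows from inverting $A(x)=1/g(-x)$ and replacing $x$ by $-x$. Substituting $-x$ into the closed form for $A$ gives $A(-x)=1-ax-\frac{(-x)^3(ab+c)}{1-ax+bx^2}\,c\!\left(\frac{-x^3(ab+c)}{(1-ax+bx^2)^2}\right)$, and I would then invert this expression, using the Catalan identity $\frac{1}{1-u\,c(v)}$-type simplifications (specifically that $c(v)$ satisfies $c(v)=1+v\,c(v)^2$) to collapse the reciprocal back into a single Catalan term. The algebra here is the main obstacle: reciprocating a $1+u\,c(v)$ expression and re-expressing it as $\frac{1}{1-ax-bx^2}\,c\!\left(\frac{-x^2(b+cx)}{(1-ax-bx^2)^2}\right)$ requires the functional equation of $c$ and careful bookkeeping of signs, and it is here that a stray sign or a misplaced factor would most easily creep in. I would verify the final forms by expanding both sides as power series to several orders in $x$ (treating $a,b,c$ as formal parameters) to catch any such error before declaring the identities proved.
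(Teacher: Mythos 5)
Your proposal is correct and follows the paper's outline exactly --- the same functional equation $A(x)=1+xB\bigl(x^2/A(x)\bigr)$, the same quadratic $A^2-(1+ax-bx^2)A-(bx^2-cx^3)=0$, and the same passage $g(x)=1/A(-x)$ --- but it differs usefully in technique: the paper exhibits the explicit surd branch and then merely \emph{asserts} the two Catalan rewritings, whereas your lemma that $Y=P+QY^2$ (with matching constant term) forces $Y=P\,c(PQ)$ makes those rewritings, and the branch selection, automatic. Two of your flagged uncertainties resolve as follows. The shift $A=(1+ax-bx^2)+R$ you first propose gives a correct but different-looking answer, with Catalan argument $\frac{-x^2(b-cx)}{(1+ax-bx^2)^2}$; the shift matching the stated formula is $A=(1+ax)+S$, whose residual satisfies $S^2+(1+ax+bx^2)S+(ab+c)x^3=0$, i.e.\ $S=-\frac{(ab+c)x^3}{1+ax+bx^2}-\frac{S^2}{1+ax+bx^2}$, and your lemma then yields the claimed $A(x)$ at once; moreover the discriminant identity you said you would check, namely $(1+ax-bx^2)^2+4(bx^2-cx^3)=(1+ax+bx^2)^2-4x^3(ab+c)$, is indeed true, so the two forms agree. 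For $g(x)$ you can avoid inverting a Catalan expression altogether: replacing $x$ by $-x$ in the quadratic gives $A(-x)^2-(1-ax-bx^2)A(-x)-(bx^2+cx^3)=0$, and dividing by $A(-x)^2$ shows that $g=1/A(-x)$ satisfies $g=\frac{1}{1-ax-bx^2}-\frac{x^2(b+cx)}{1-ax-bx^2}\,g^2$, which is already in canonical form with the correct constant term $g(0)=1$, so the lemma delivers $g(x)=\frac{1}{1-ax-bx^2}\,c\bigl(\frac{-x^2(b+cx)}{(1-ax-bx^2)^2}\bigr)$ immediately --- arguably cleaner than the paper's unexplained final rewriting of the surd expression, and cleaner than your plan of reciprocating $A(-x)$ term by term.
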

\begin{proof}
In order to solve for $A(x)$, we must solve the equation
$$u=1+x \frac{a-cx^2/u}{bx^2/u+1}$$ for $u(x)$.
We find that the appropriate branch is given by
$$u(x)=A(x)=\frac{1+ax-bx^2+\sqrt{1+2ax+(a^2+2b)x^2-2(ab+2c)x^3+bx^4}}{2}.$$
Using $c(x)=\frac{1-\sqrt{1-4x}}{2x}$ we can put this in the form
$$A(x)=1+ax-\frac{x^3(ab+c)}{1+ax+bx^2} c\left(\frac{x^3(ab+c)}{(1+ax+bx^2)^2}\right).$$
Now we have $$g(x)=\frac{1}{A(-x)}.$$ We find that
$$g(x)=\frac{2}{1-ax-bx^2+\sqrt{1-2ax+(a^2+2b)x^2+2(ab+2c)x^3+bx^4}},$$ or
$$g(x)=\frac{-1+ax+bx^2+\sqrt{1-2ax+(a^2+2b)x^2+2(ab+2c)x^3+b^2x^4}}{2x^2(cx+b)}.$$
This last expression can then be expressed as
$$g(x)=\frac{1}{1-ax-bx^2}c\left(\frac{-x^2(b+cx)}{(1-ax-bx^2)^2}\right).$$
\end{proof}

We now recall that $c(x)$ can be expressed as the continued fraction \cite{CFT, Wall}
$$c(x)=
\cfrac{1}{1-
\cfrac{x}{1-
\cfrac{x}{1-\cdots}}}.$$
Thus we have
\begin{corollary} For the pseudo-involution $(g(x), xg(x))$ with $B$-sequence given by
$$B(x)=\frac{a-cx}{1+bx},$$ we have that $g(x)$ can be expressed as the continued fraction
$$g(x)=
\cfrac{1}{1-ax-bx^2+
\cfrac{x^2(b+cx)}{1-ax-bx^2+
\cfrac{x^2(b+cx)}{1-ax-bx^2+\cdots}}}.$$
\end{corollary}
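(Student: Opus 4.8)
The plan is to read the continued fraction straight off the closed form
$$g(x)=\frac{1}{1-ax-bx^2}\,c\!\left(\frac{-x^2(b+cx)}{(1-ax-bx^2)^2}\right)$$
established in the preceding Proposition, by substituting the known continued fraction for $c$ and rescaling. Throughout I would write $P=1-ax-bx^2$ and $Q=-x^2(b+cx)$, so that the argument of $c$ above is $Q/P^2$ and the partial numerators appearing in the target are $x^2(b+cx)=-Q$. First I would substitute $Q/P^2$ for the variable in the continued fraction
$$c(x)=\cfrac{1}{1-\cfrac{x}{1-\cfrac{x}{1-\cdots}}},$$
which exhibits $c(Q/P^2)$ as the continued fraction whose first partial numerator is $1$, whose remaining partial numerators all equal $-Q/P^2$, and whose partial denominators all equal $1$. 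Multiplying by $1/P$ then absorbs a factor $1/P$ into the leading numerator.

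The key step is a single equivalence transformation, i.e.\ the classical identity that for nonzero $c_1,c_2,\dots$
$$\cfrac{\alpha_1}{\beta_1+\cfrac{\alpha_2}{\beta_2+\cfrac{\alpha_3}{\beta_3+\cdots}}}=\cfrac{c_1\alpha_1}{c_1\beta_1+\cfrac{c_1c_2\alpha_2}{c_2\beta_2+\cfrac{c_2c_3\alpha_3}{c_3\beta_3+\cdots}}}.$$
Applying this with every $c_n=P$ to the continued fraction for $\tfrac{1}{P}\,c(Q/P^2)$ (which has $\alpha_1=1/P$, $\beta_1=1$ and $\alpha_n=-Q/P^2$, $\beta_n=1$ for $n\ge2$) turns the leading numerator into $c_1\alpha_1=1$, every partial denominator into $c_n\beta_n=P=1-ax-bx^2$, and every subsequent partial numerator into $c_{n-1}c_n\alpha_n=P^2\cdot(-Q/P^2)=-Q=x^2(b+cx)$. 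This is precisely the stated continued fraction, so the corollary follows.

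I do not expect a genuine obstacle here, only the need to confirm that the manipulation is meaningful. Since $P=1+O(x)$ is a unit and $Q=O(x^2)$, the argument $Q/P^2$ of $c$ has zero constant term, so $c(Q/P^2)$ is a well-defined formal power series and the continued fraction converges in the $x$-adic topology; because each multiplier $c_n=P$ is a unit, the equivalence transformation leaves the value unchanged at every level. As an independent sanity check, one can note that the continued fraction $G$ on the right-hand side satisfies $G=1/(P-QG)$, hence $QG^2-PG+1=0$, and that its branch regular at $x=0$ agrees with $\tfrac{1}{P}\,c(Q/P^2)=\frac{P-\sqrt{P^2-4Q}}{2Q}$, confirming that the rescaling has not introduced a spurious branch.
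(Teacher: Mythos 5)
Your proposal is correct and follows essentially the same route as the paper, which states the corollary as an immediate consequence of substituting the continued fraction for $c(x)$ into the closed form $g(x)=\frac{1}{1-ax-bx^2}\,c\!\left(\frac{-x^2(b+cx)}{(1-ax-bx^2)^2}\right)$ from the preceding proposition. You have merely made explicit the equivalence transformation (with multipliers $P=1-ax-bx^2$) that the paper leaves implicit, together with the formal-power-series justification, which is a welcome but not divergent elaboration.
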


\begin{corollary} For the pseudo-involution $(g(x), xg(x))$ with $B$-sequence given by
$$B(x)=a+dx,$$ we have that $g(x)$ can be expressed as the continued fraction
$$g(x)=
\cfrac{1}{1-ax-
\cfrac{dx^3}{1-ax-
\cfrac{dx^3}{1-ax-\cdots}}}.$$
\end{corollary}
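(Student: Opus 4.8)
The plan is to obtain this corollary as an immediate specialization of the preceding one, rather than redoing any analysis from scratch. The first step is to recognize that the linear $B$-sequence $B(x)=a+dx$ is the $b\to 0$ degeneration of the rational family $B(x)=\frac{a-cx}{1+bx}$. Indeed, setting $b=0$ gives $\frac{a-cx}{1+bx}=a-cx$, and this equals $a+dx$ precisely when $c=-d$. So the required parameter dictionary is $b=0$ and $c=-d$, and the whole content of the corollary should drop out by substituting these two values into the continued fraction already established.

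Next I would carry out that substitution in the continued fraction
$$g(x)=
\cfrac{1}{1-ax-bx^2+
\cfrac{x^2(b+cx)}{1-ax-bx^2+
\cfrac{x^2(b+cx)}{1-ax-bx^2+\cdots}}}.$$
With $b=0$ the repeated partial denominator $1-ax-bx^2$ collapses to $1-ax$, and the repeated partial numerator becomes $x^2(b+cx)=x^2(0+(-d)x)=-dx^3$. The only point requiring a moment's care is the sign bookkeeping: the numerators enter the continued fraction with a plus sign, so $+(-dx^3)$ must be rewritten as $-dx^3$, which turns each level $1-ax+\cfrac{-dx^3}{\cdots}$ into $1-ax-\cfrac{dx^3}{\cdots}$, matching the stated form exactly.

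This is the entire argument, and there is no genuine analytic obstacle: the validity of the continued-fraction expansion — together with the underlying identity $g(x)=\frac{1}{1-ax-bx^2}\,c\!\left(\frac{-x^2(b+cx)}{(1-ax-bx^2)^2}\right)$ and the Catalan continued fraction for $c(x)$ — is inherited from the general case once the specialization is legitimate. Legitimacy holds because the formulas of the preceding proposition and corollary are polynomial in $b$ and $c$ at each truncation level, so the limit $b\to 0$, $c=-d$ may be taken termwise in the formal power series ring. Thus the only thing I would write out explicitly is the sign simplification above; everything else is quotation of the previous corollary.
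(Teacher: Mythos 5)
Your proposal is correct and matches the paper's (implicit) argument: the paper states this corollary without separate proof, precisely because it is the specialization $b=0$, $c=-d$ of the general continued fraction in the preceding corollary, which is exactly the substitution and sign simplification you carry out. Your parameter dictionary and the resulting collapse of $1-ax-bx^2$ to $1-ax$ and of $x^2(b+cx)$ to $-dx^3$ are both right, so nothing further is needed.
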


\begin{corollary} For the pseudo-involution $(g(x), xg(x))$ with $B$-sequence given by
$$B(x)=\frac{a}{1-bx},$$ we have that $g(x)$ can be expressed as the continued fraction
$$g(x)=
\cfrac{1}{1-ax+bx^2-
\cfrac{bx^2}{1-ax+bx^2-
\cfrac{bx^2}{1-ax+bx^2-\cdots}}}.$$
\end{corollary}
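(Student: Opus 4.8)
The plan is to obtain this statement as a direct specialization of the corollary already proved for the full three-parameter family with $B$-sequence $B(x)=\frac{a-cx}{1+bx}$, rather than re-deriving anything from scratch. Because the symbols $a,b$ are reused in the present statement with a different meaning, I would first rename the parameters of that general corollary to $\alpha,\beta,\gamma$, so that it reads: for $B(x)=\frac{\alpha-\gamma x}{1+\beta x}$,
$$g(x)=\cfrac{1}{1-\alpha x-\beta x^2+\cfrac{x^2(\beta+\gamma x)}{1-\alpha x-\beta x^2+\cfrac{x^2(\beta+\gamma x)}{1-\alpha x-\beta x^2+\cdots}}}.$$

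Next I would match the rational function $\frac{\alpha-\gamma x}{1+\beta x}$ against the target $\frac{a}{1-bx}$. Equating denominators gives $1+\beta x=1-bx$, forcing $\beta=-b$; equating numerators gives $\alpha-\gamma x=a$, forcing $\alpha=a$ and $\gamma=0$. Substituting $\alpha=a$, $\beta=-b$, $\gamma=0$ into the displayed continued fraction, the repeated denominator $1-\alpha x-\beta x^2$ turns into $1-ax+bx^2$, and the repeated numerator $x^2(\beta+\gamma x)$ turns into $-bx^2$; writing each occurrence of $+(-bx^2)$ as a subtracted $bx^2$ gives exactly the asserted expression.

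The only step needing attention --- and it is the closest thing to an obstacle --- is the sign bookkeeping forced by the reuse of $a$ and $b$: one must check that $1+\beta x=1-bx$ sends $\beta\mapsto-b$, so that the quadratic term $-\beta x^2$ flips to $+bx^2$ relative to the general case. If instead a self-contained argument is wanted, I would start from the closed form of the Proposition giving $g(x)$, specialized to $g(x)=\frac{1}{P}\,c\!\left(Q/P^2\right)$ with $P=1-ax+bx^2$ and $Q=bx^2$, and use the functional equation $c(y)=\frac{1}{1-y\,c(y)}$ to show that $g$ satisfies the fixed-point relation $g=\frac{1}{P-Qg}=\frac{1}{1-ax+bx^2-bx^2g}$. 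Unfolding this relation repeatedly reproduces the continued fraction directly. Either route is short, and no genuinely hard step arises.
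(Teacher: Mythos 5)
Your proposal is correct and matches the paper's (implicit) argument: the paper states this corollary as a direct consequence of the general continued-fraction corollary for $B(x)=\frac{a-cx}{1+bx}$, obtained by exactly the specialization you describe ($c=0$ with $b$ replaced by $-b$, i.e., $\alpha=a$, $\beta=-b$, $\gamma=0$ in your notation), and your sign bookkeeping is right. Your alternative self-contained route via the fixed-point relation $c(y)=\frac{1}{1-y\,c(y)}$ is also sound, but it is essentially the same mechanism that underlies the paper's general corollary, so no genuinely different method is involved.
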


\section{Examples}
In this section, we examine some examples of the above sequences $g_n$, where $g(x)=\sum_{n=0}^{\infty}g_n x^n$ is such that $(g(x), xg(x))$ is a pseudo-involution. Thus we have $(g(x),-xg(x))^2=(1,x)$.
\begin{example}
We recall that for the pseudo-involution $(g(x), xg(x))$ with $B$-sequence given by
$$B(x)=a+dx,$$ we have that $g(x)$ can be expressed as the continued fraction
$$g(x)=
\cfrac{1}{1-ax-
\cfrac{dx^3}{1-ax-
\cfrac{dx^3}{1-ax-\cdots}}}.$$
Thus $$g(x)=\left(\frac{1}{1-ax},\frac{dx^3}{(1-ax)^2}\right)\cdot c(x)=\frac{1}{1-ax}c\left(\frac{dx^3}{(1-ax)^2}\right).$$
We then have
\begin{align*}
g_n&=\sum_{k=0}^{\lfloor \frac{n}{3} \rfloor} \binom{n-k}{n-3k}d^k a^{n-3k}C_k\\
&=\sum_{k=0}^n \frac{1}{n-k+3}\binom{\frac{2n+k}{3}}{\frac{n+2k}{3}}\binom{\frac{n+2k}{3}}{k}\left(2 \cos\left(\frac{2(n-k)\pi}{3}\right)+1\right)a^k d^{(n-k)/3}.\end{align*}
For instance, we have
$$\left(\begin{array}{c}g_0\\g_1\\g_2\\g_3\\g_4\\g_5\\g_6\\g_7\\\end{array}\right)=\left(
\begin{array}{cccccccc}
 1 & 0 & 0 & 0 & 0 & 0 & 0 & 0 \\
 0 & 1 & 0 & 0 & 0 & 0 & 0 & 0 \\
 0 & 0 & 1 & 0 & 0 & 0 & 0 & 0 \\
 d & 0 & 0 & 1 & 0 & 0 & 0 & 0 \\
 0 & 3 d & 0 & 0 & 1 & 0 & 0 & 0 \\
 0 & 0 & 6 d & 0 & 0 & 1 & 0 & 0 \\
 2 d^2 & 0 & 0 & 10 d & 0 & 0 & 1 & 0 \\
 0 & 10 d^2 & 0 & 0 & 15 d & 0 & 0 & 1 \\
\end{array}
\right)\left(\begin{array}{c}a_0\\a_1\\a_2\\a_3\\a_4\\a_5\\a_6\\a_7\\\end{array}\right).$$
The above matrix is an aeration of \seqnum{A060693}, which counts the number of Schr\"oder paths from $(0,0)$ to $(2n,0)$ having $k$ peaks.

We note that the Hankel transform $h_n=|g_{i+j}|_{0\le i,j \le n}$ begins
$$1, 0, - d^2, - d^4, 0, d^{10}, d^{14}, 0, - d^{24}, - d^{30}, 0,\ldots.$$
For $a=d=1$, we get the sequence that begins
$$ 1, 1, 1, 2, 4, 7, 13, 26, 52, 104, 212,\ldots$$
This is \seqnum{A023431}, which counts Motzkin paths of length $n$ with no $UD$'s and no $UU$'s.
For $a=1, d=2$ we get the sequence \seqnum{A091565}, that begins
$$1, 1, 1, 3, 7, 13, 29, 71, 163, 377, 913,\ldots.$$
For $a=2$, $d=1$, we get the sequence \seqnum{A091561} that begins
$$1, 2, 4, 9, 22, 56, 146, 388, 1048, 2869, 7942,\ldots.$$
The related sequence \seqnum{A152225} that begins
$$1,1, 2, 4, 9, 22, 56, 146, 388, 1048, 2869, 7942,\ldots$$ counts
the number of Dyck paths of semi-length $n$ with no peaks at height $0$ (mod $3$) and no valleys at height $2$ (mod $3$).
\end{example}
\begin{example}
When $c=0$ we obtain that for the pseudo-involution $(g(x), xg(x))$ with $B$-sequence given by
$$B(x)=\frac{a}{1+bx},$$ we have that $g(x)$ can be expressed as the continued fraction
$$g(x)=
\cfrac{1}{1-ax-bx^2+
\cfrac{bx^2}{1-ax-bx^2+
\cfrac{bx^2}{1-ax-bx^2+\cdots}}}.$$ In this case we have
$$g(x)=\left(\frac{1}{1-ax-bx^2}, \frac{-bx^2}{(1-ax-bx)^2}\right)\cdot c(x).$$
We find that
\begin{align*}g_n&=\sum_{k=0}^n (\sum_{j=0}^{n-2k}\binom{2k+j}{j}\binom{j}{n-2k-j}b^{n-2k-j}a^{2j+2k-n})(-b)^k C_k\\
&= \sum_{k=0}^n (\sum_{i=0}^{n-2k} \binom{n-i}{2k}\binom{n-2k-i}{i}b^i a^{n-2k-2i})(-b)^k C_k.\end{align*}

We have the following characterization of these sequences \cite{Gen, Conj}.
\begin{proposition} For the pseudo-involution $(g(x), xg(x))$ with $B$-sequence given by
$$B(x)=\frac{a}{1+bx},$$ we have that $g_n$ satisfies the recurrence
\begin{equation*}
g_n=
\begin{cases}
 1, & \text{if $n= 0$};\\
 a, & \text{if $n = 1$};\\
 a g_{n-1}+b g_{n-2}-b \sum_{k=0}^{n-2} g_k g_{n-2-k}, & \text{if $n>1$.}
 \end{cases}
  \end{equation*}
\end{proposition}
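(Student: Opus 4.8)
The plan is to reduce the stated combinatorial recurrence to a single quadratic functional equation for $g(x)$ and then read off the coefficient of $x^n$. The natural starting point is the expression for $g(x)$ recorded just above this proposition in the $c=0$ case, namely the continued fraction
$$g(x)=\cfrac{1}{1-ax-bx^2+\cfrac{bx^2}{1-ax-bx^2+\cfrac{bx^2}{1-ax-bx^2+\cdots}}}.$$
Writing $D(x)=1-ax-bx^2$, the tail of this continued fraction beyond the first level is identical in form to the whole object, so by self-similarity $g(x)=1/(D(x)+bx^2 g(x))$. Equivalently, one can substitute the closed form $g(x)=\frac{1}{D(x)}\,c(w)$ with $w=\frac{-bx^2}{D(x)^2}$ into the defining Catalan identity $c(w)=1+w\,c(w)^2$; since $c(w)=D(x)g(x)$, the factor $D(x)^2$ cancels against $c(w)^2$ and the same relation drops out.

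Either route yields the clean functional equation
$$(1-ax-bx^2)\,g(x)=1-bx^2\,g(x)^2,$$
which I would rearrange into the form $g(x)=1+ax\,g(x)+bx^2\,g(x)-bx^2\,g(x)^2$, ready for coefficient extraction.

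The remaining step is a routine convolution computation. Taking $[x^n]$ of both sides, the constant $1$ contributes only at $n=0$, the term $ax\,g(x)$ contributes $a\,g_{n-1}$, the term $bx^2\,g(x)$ contributes $b\,g_{n-2}$, and the term $bx^2\,g(x)^2$ contributes $b\sum_{k=0}^{n-2}g_k\,g_{n-2-k}$. For $n>1$ this is exactly the stated recurrence. The base cases follow by inspection: $[x^0]$ gives $g_0=1$, and $[x^1]$ gives $g_1=a\,g_0=a$.

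The only point requiring any care is the derivation of the functional equation itself: in the continued-fraction route, justifying the self-similarity as an identity of formal power series (that the tail converges to the same series $g(x)$), and in the algebraic route, verifying that the cancellation of $D(x)^2$ against $c(w)^2$ is exact. Once the quadratic relation $(1-ax-bx^2)g=1-bx^2g^2$ is established, the rest is a one-line extraction of coefficients, so I do not anticipate a genuine obstacle beyond this first step.
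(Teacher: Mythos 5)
Your proof is correct, but note that the paper itself does not prove this proposition: it is stated as a known characterization with a pointer to the references \cite{Gen, Conj}, so your argument is a self-contained proof of something the paper leaves to the literature. The route you take is the natural one and almost certainly mirrors what happens in the cited sources: starting from the closed form $g(x)=\frac{1}{D(x)}\,c\!\left(\frac{-bx^2}{D(x)^2}\right)$ with $D(x)=1-ax-bx^2$ (which is legitimately available to you, being the $c=0$ specialization of the paper's earlier proposition giving $g(x)$ in Catalan form), you derive the quadratic functional equation
$$(1-ax-bx^2)\,g(x)=1-bx^2\,g(x)^2,$$
and extract coefficients. On your one stated point of concern: the algebraic route needs no care at all and should simply be taken as the proof. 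Since $w=-bx^2/D(x)^2$ is a formal power series of order $2$, the composition $c(w)$ is a well-defined formal power series and the Catalan identity $c(w)=1+w\,c(w)^2$ holds identically; substituting $c(w)=D(x)g(x)$ makes the $D(x)^2$ cancellation exact and yields the functional equation in one line. By contrast, the continued-fraction self-similarity argument would require interpreting the infinite continued fraction as a limit of convergents in the formal topology, so it is better relegated to motivation. The coefficient extraction and base cases are as routine as you say, and the result checks against the paper's own data (for $a=1$, $b=-1$ the recurrence reproduces $1,1,1,2,4,8,17,37,\ldots$, the RNA sequence cited in the same example).
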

The Hankel transform of $g_n$ begins
$$1, 0, - a^2 b^2, - a^4 b^4, a^6 b^7, 0, - a^{12}b^{15}, a^{16}b^{20}, a^{20}b^{26}, 0, - a^{30}b^{40},\ldots.$$
For $a=1$, $b=-1$ we get the RNA sequence \seqnum{A004148} that begins
$$1, 1, 1, 2, 4, 8, 17, 37, 82,\ldots.$$
For $a=2$, $b=-1$ we get the sequence \seqnum{A187256}, which begins
$$1, 2, 4, 10, 28, 82, 248, 770, 2440,\ldots.$$  This counts the number of peakless Motzkin paths of length $n$, assuming that the $(1,0)$ steps come in $2$ colors (Emeric Deutsch).
\end{example}
\begin{example} In the general case, we have that
$$g(x)=\frac{1}{1-ax-bx^2}c\left(\frac{-x^2(b+cx)}{(1-ax-bx^2)^2}\right).$$
One expansion of this gives us
$$g_n=\sum_{k=0}^n (\sum_{i=0}^n \binom{k}{i}c^i b^{k-i} \sum_{m=0}^{n-2k-i} \binom{n-i-m}{n-2k-i-m}\binom{n-2k-i-m}{m}b^ma^{n-2k-2m-i})(-1)^k C_k.$$
For $a=2$, $b=-1$ and $c=1$, we obtain the sequence $g_n(2,-1,1)$ \seqnum{A105633} that begins
$$	1, 2, 4, 9, 22, 57, 154, 429, 1223, 3550, 10455,\ldots.$$ This sequence counts the number of Dyck paths of semi-length $n+1$ avoiding $UUDU$. We note that the related sequence that begins
$$	1,1, 2, 4, 9, 22, 57, 154, 429, 1223, 3550, 10455,\ldots $$ has generating function given by
$$
\cfrac{1}{1-x-
\cfrac{x^2}{1-
\cfrac{x}{1-x-
\cfrac{x^2}{1-
\cfrac{x}{1-x-\cdots}}}}}.
$$
The general term of this sequence is given by
$$\sum_{k=0}^{\lfloor \frac{n}{2} \rfloor} \sum_{j=0}^{n-k} \binom{n-k}{j}N_{j,k}$$
where $(N_{n,k})$ is the Narayana triangle \seqnum{A090181} with
$$N_{n,k}=\frac{1}{n-k+1} \binom{n}{k}\binom{n-1}{n-k},$$ which begins
$$\left(
\begin{array}{ccccccc}
 1 & 0 & 0 & 0 & 0 & 0 & 0 \\
 0 & 1 & 0 & 0 & 0 & 0 & 0 \\
 0 & 1 & 1 & 0 & 0 & 0 & 0 \\
 0 & 1 & 3 & 1 & 0 & 0 & 0 \\
 0 & 1 & 6 & 6 & 1 & 0 & 0 \\
 0 & 1 & 10 & 20 & 10 & 1 & 0 \\
 0 & 1 & 15 & 50 & 50 & 15 & 1 \\
\end{array}
\right).$$
Equivalently, this sequence is equal to the diagonal sums of the matrix product
$$\left(\binom{n}{k}\right)\cdot (N_{n,k}),$$ where this product matrix \seqnum{A130749} begins
$$\left(
\begin{array}{ccccccc}
 1 & 0 & 0 & 0 & 0 & 0 & 0 \\
 1 & 1 & 0 & 0 & 0 & 0 & 0 \\
 1 & 3 & 1 & 0 & 0 & 0 & 0 \\
 1 & 7 & 6 & 1 & 0 & 0 & 0 \\
 1 & 15 & 24 & 10 & 1 & 0 & 0 \\
 1 & 31 & 80 & 60 & 15 & 1 & 0 \\
 1 & 63 & 240 & 280 & 125 & 21 & 1 \\
\end{array}
\right).$$
The inverse binomial transform of $g_n(2,-1,1)$ is the sequence \seqnum{A007477} that begins
$$1, 1, 1, 2, 3, 6, 11, 22, 44, 90, 187, 392,\ldots.$$ This sequence counts the number of Dyck $(n+1)$-paths containing no $UDU$s and no sub-paths of the form $UUPDD$ where $P$ is a nonempty Dyck path (David Callan).
\end{example}
\section{Hankel transforms and Somos $4$ sequences}

The sequence $g_n(a,b,c)$ begins
$$1, a, a^2, a^3 - ab - c, a^4 - 3a^2b - 3ac, a^5 - 6a^3b - 6a^2c + ab^2 + bc,\ldots.$$
The Hankel transform of $g_n(a,b,c)$ begins
$$1, 0, - a^2b^2 - 2abc - c^2, - a^4b^4 - 4a^3b^3c - 6a^2b^2c^2 - 4abc^3 - c^4, \ldots.$$
Proceeding numerically, we can conjecture that the sequence $t_n$ that begins
$$- a^2b^2 - 2abc - c^2, - a^4b^4 - 4a^3b^3c - 6a^2b^2c^2 - 4abc^3 - c^4, \ldots$$ is a $((ab+c)^2, b(ab+c)^2)$ Somos $4$ sequence. By this we mean that
$$t_n=\frac{(ab+c)^2 t_{n-1} t_{n-3}+b(ab+c)^2 t_{n-2}^2}{t_{n-4}}.$$
\begin{example} We take $a=2$, $b=-2$ and $c=3$. We have
$$g(x)=\frac{1}{1-2x+2x^2}c\left(\frac{x^2(2-3x)}{(1-2x+2x^2)^2}\right)=\frac{1-2x+2x^2-\sqrt{1-4x+4x^3+4x^4}}{2x^2(2-3x)}.$$ This expands to give the sequence $g_n(2,-2,3)$ that begins
$$1, 2, 4, 9, 22, 58, 162, 472, 1418, 4357, 13618,\ldots.$$
The Hankel transform of $g_n(3,-2,3)$ begins
$$1, 0, -1, -1, -2, -3, 5, 28, 67, 411, 506,\ldots.$$
Now the sequence $$ -1, -1, -2, -3, 5, 28, 67, 411, 506,\ldots$$
is a $(1,-2)$ Somos $4$ sequence, associated with the elliptic curve $y^2 + y = x^3 + 3·x^2 + x$.
\end{example}
\begin{example} The sequence $g_n(-1,2,1)$ has generating function
$$g(x)=\frac{1}{1+x-2x^2}c\left(\frac{-x^2(2+x)}{(1+x-2x^2)^2}\right)=\frac{\sqrt{1+2x+5x^2+4x^4}-2x^2+1}{2x^2(x+2)}.$$ The sequence $g_n(-1,2,1)$ begins
$$1, -1, 1, 0, -2, 3, 1, -12, 20, 4, -84,\ldots.$$
It has a Hankel transform that begins
$$1, 0, -1, -1, 2, -1, -9, 16, 73, 145, -1442,\ldots.$$
Here, the sequence $-1, -1, 2, -1, -9, 16, 73, 145, -1442,\ldots$ is a
$(1,2)$ Somos $4$ sequence. It is related to \seqnum{A178075}, which is the $(1,2)$ Somos $4$ sequence that begins
$$1, 1, -2, 1, 9, -16, -73, -145, 1442,\ldots.$$
\end{example}
\begin{example} The sequence $g_n(-1,-2,-1)$ has generating function
$$g(x)=\frac{1}{1+x+2x^2}c\left(\frac{x^2(2+x)}{(1+x+2x^2)^2}\right)=\frac{1+x+2x^2-\sqrt{1+2x-3x^2+4x^4}}{2x^2(x+2)}.$$ The sequence begins
$$1, -1, 1, -2, 4, -9, 21, -50, 122, -302, 758,\ldots$$ and its Hankel transform begins
$$1, 0, -1, -1, -2, -1, 7, 16, 57, 113, -670,\ldots.$$
The sequence $$ -1, -1, -2, -1, 7, 16, 57, 113, -670,\ldots$$ is a $(1,-2)$ Somos $4$ sequence. Apart from signs, this is \seqnum{A178622}, which is associated with the elliptic curve $y^2-3xy-y=x^3-x$.
In fact, we can show that the $(1,-2)$ Somos $4$ sequence $1, 1, 2, 1, -7, -16, -57,\ldots$ can be described as the Hankel transform of the sequence that begins
$$1, 0, 1, -1, 4, -10, 30, -84, 237, -653, 1771, -4699, 12173, -30625,\ldots$$ with generating function
$$f(x)=\frac{2x}{\sqrt{1+6x+9x^2-4x^3-8x^4}-x-1}.$$
Note that many other sequences can have the same Hankel transform.

The relationship between $g(x)$ and $f(x)$ is given by
$$g(x)=\left(\frac{1}{1+2x}, \frac{-x}{1+2x}\right)\cdot \frac{f(x)(1+2x)-1}{f(x)x(3x+2)}.$$
\end{example}

\section{From elliptic curve to Riordan pseudo-involution}
In this section, we reprise the last example to make explicit the steps that lead from the elliptic curve given  by $$y^2-3xy-y=x^3-x$$ to the Riordan pseudo-involution defined by $g_n(-1,-2,-1)$.
The first step is to solve the quadratic equation
$$y^2-3xy-y=x^3-x$$
for $y$. The branch that we require is given by
$$\frac{1+3x-\sqrt{1+2x+9x^2+4x^3}}{2}.$$
This expands to give a sequence that begins
$$0,1,-2,1,3,-7,\ldots.$$
We must discard the first two terms, to get the generating function
$$\left(\frac{1+3x-\sqrt{1+2x+9x^2+4x^3}}{2}-x\right)/x^2=\frac{1+x-\sqrt{1+2x+9x^2+4x^3}}{2x^2}.$$
We now form the fraction
$$\frac{1}{1-x-x^2\left(\frac{1+x-\sqrt{1+2x+9x^2+4x^3}}{2x^2}\right)},$$ to get
the generating function
$$\frac{2}{1-3x+\sqrt{1+2x+9x^2+4x^3}}.$$
We now revert the generating function $\frac{2x}{1-3x+\sqrt{1+2x+9x^2+4x^3}}$ to obtain the generating function
$$\frac{1+3x-\sqrt{1+6x+9x^2-4x^3-8x^4}}{2x^2}.$$
Finally, we form the generating function
$$\frac{1}{1-x^2\left(\frac{1+3x-\sqrt{1+6x+9x^2-4x^3-8x^4}}{2x^3}\right)}$$ to arrive at
$$f(x)=\frac{2x}{\sqrt{1+6x+9x^2-4x^3-8x^4}-x-1}.$$
The sought after involutory generating function is now obtained by
$$g(x)=\left(\frac{1}{1+2x}, \frac{-x}{1+2x}\right)\cdot \frac{f(x)(1+2x)-1}{f(x)x(3x+2)}.$$
\begin{figure}
\begin{center}
\includegraphics[height=80mm,width=0.7\textwidth]{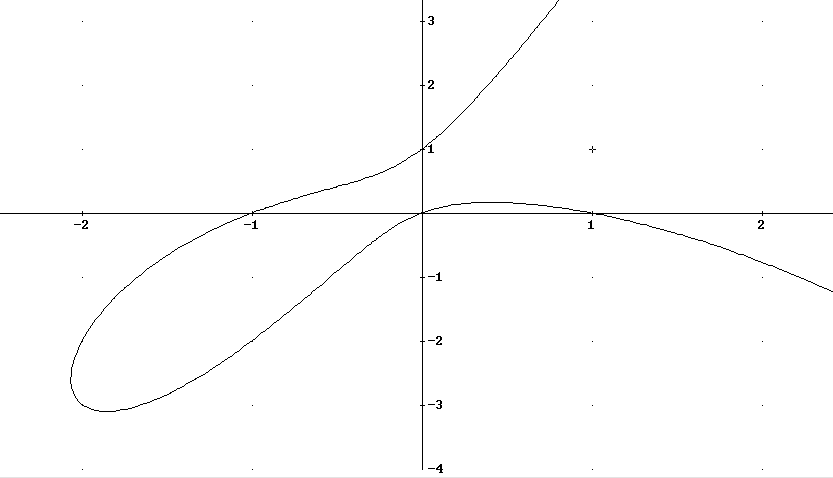}
\end{center}
\caption{The elliptic curve $y^2-3xy-y=x^3-x$}
\end{figure}

\begin{example} Inspired by the last section, we now start with the elliptic curve
$$y^2 - 2xy - y = x^3 - x$$ and seek to produce an involutary power series $g(x)$.
We begin as before by solving the quadratic in $y$ to get
$$\frac{1+2x-\sqrt{1+4x^2+4x^3}}{2}$$ which expands to give a sequence that begins $0,1,-1,-1,1,\ldots$.
We now form
$$\left(\frac{1+2x-\sqrt{1+4x^2+4x^3}}{2}-x\right)/x^2=\frac{1-\sqrt{1+4x^2+4x^3}}{2x^2}.$$
We now form the fraction
$$\frac{1}{1-x-x^2\left(\frac{1-\sqrt{1+4x^2+4x^3}}{2x^2}\right)}=\frac{2}{1-2x+\sqrt{1+4x^2+4x^3}}.$$
We revert the generating function $\frac{2x}{1-2x+\sqrt{1+4x^2+4x^3}}$ to obtain the generating function
$$\frac{1+2x-\sqrt{1+4x+4x^2-4x^3-4x^4}}{2x^2}.$$
We next from the generating function
$$\frac{1}{1-x^2\left(\frac{1+2x-\sqrt{1+4x+4x^2-4x^3-4x^4}}{2x^3}\right)}$$ to get
$$f(x)=\frac{1+\sqrt{1+4x+4x^2-4x^3-4x^4}}{1+x-x^2-x^3}.$$
We now let
$$g(x)=\left(\frac{1}{1+x}, \frac{-x}{1+x}\right)\cdot \frac{f(x)(1+x)-1}{f(x)x(2x+1)}.$$
Thus we arrive at
$$g(x)=\frac{1+x^2-\sqrt{1-2x^2+4x^3+x^4}}{2x^2(1-x)}=\frac{1}{1+x^2}c\left(\frac{x^2(1-x)}{(1+x^2)^2}\right).$$
This means that the sequence found is the involutory sequence $g_n(0,-1,1)$.

This sequence begins
$$1, 0, 0, -1, 0, -1, 2, -1, 5, -6, 9, -22, 28, -57, 104, -163, \ldots$$ and its Hankel transform begins
$$1, 0, -1, -1, -1, 1, 2, 3, 1, -7, -11,\ldots.$$
The sequence $$1, 1, 1, -1, -2, -3, -1, 7, 11,\ldots$$ is the $(1,-1)$ Somos $4$ sequence \seqnum{A050512} which is associated with the curve $$y^2-2xy-y=x^3-x$$ in the following way. This Somos sequence is $(-1)^{\binom{n}{2}}$ times the Hankel transform of the sequence with g.f.
$$
\cfrac{1}{1-
\cfrac{x^2}{1+
\cfrac{x^2}{1-
\cfrac{x^2}{1-
\cfrac{2x^2}{1+
\cfrac{(3/4)x^2}{1+
\cfrac{(2/9)x^2}{1+\cdots}}}}}}},$$
 where
$$-1,1,2,-3/4,-2/9,21,\ldots$$
 are the $x$-coordinates of the multiples of $z=(0,0)$ on the elliptic curve E:$y^2-2xy-y=x^3-x$.

The inverse binomial transform of $g_n$ begins
$$ 1, -1, 1, -2, 5, -12, 29, -72, 182, -466, 1207\ldots.$$ This is essentially \seqnum{A025273} or \seqnum{A217333}. The sequence $g_n$ is the partial sum sequence of the sequence that begins
$$1, -1, 0, -1, 1, -1, 3, -3, 6, -11, 15, -31, \ldots.$$
This is the alternating sign version of \seqnum{A025250}, whose binomial transform is essentially \seqnum{A025273}.
\begin{figure}
\begin{center}
\includegraphics[height=90mm,width=0.8\textwidth]{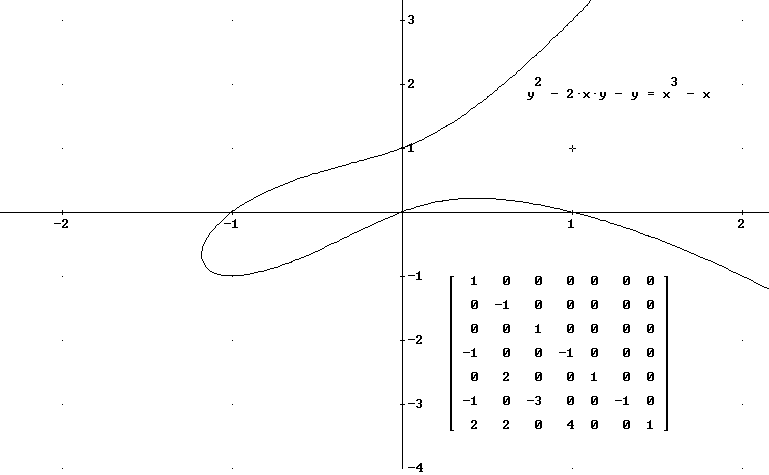}
\end{center}
\caption{The elliptic curve $y^2-2xy-y=x^3-x$ and its associated Riordan pseudo-involution}
\end{figure}
\end{example}
\section{The $A$-sequence and Somos $4$ sequences}
We recall that for a Bell pseudo-involution $(g(x), xg(x))$ for which
$$B(x)=\frac{a-cx}{1+bx},$$ we have
$$A(x)=1+ax-\frac{x^3(ab+c)}{1+ax+bx^2} c\left(\frac{x^3(ab+c)}{(1+ax+bx^2)^2}\right).$$
The Hankel transform of the expansion of $A(x)$ is not a Somos sequence, so we look at the element given by
$$\frac{1}{1+ax+bx^2}  c\left(\frac{x^3(ab+c)}{(1+ax+bx^2)^2}\right).$$ We can express this generating function as the continued fraction
$$
\cfrac{1}{1+ax+bx^2-
\cfrac{(ab+c)x^3}{1+ax+bx^2-
\cfrac{(ab+c)x^3}{1+ax+bx^2-\cdots}}}.$$
This expands to give a sequence that begins
$$1, -a, a^2 - b, - a^3 + 3ab + c, a^4 - 6a^2b - 3ac + b^2,\ldots$$ with a Hankel transform that begins
$$1, -b, - abc - c^2, - a^3b^3c + a^2b^2(b^3 - 3c^2) + abc(2b^3 - 3c^2) + c^2(b^3 - c^2),\ldots.$$
Once again, we can conjecture that this Hankel transform is a $((ab+c)^2, b(ab+c)^2)$ Somos $4$ sequence.

\begin{example}
For $(a,b,c)=(-1,1,2)$, we obtain the sequence (essentiallay \seqnum{A025258} that begins
$$1, 1, 0, 0, 2, 3, 1, 2, 11, 17, 12,\ldots$$ with generating function
$$\frac{1}{1-x+x^2}c\left(\frac{x^3}{(1-x+x^2)^2}\right)=\frac{1-x+x^2-\sqrt{1-2x+3x^2-6x^3+x^4}}{2x^3}.$$
This sequence has a Hankel transform that begins
$$1, -1, -2, -1, 5, 9, -8, -41, -61, 241,\ldots.$$
This is a $(1,1)$ Somos $4$ sequence (essentially \seqnum{A178627}) associated with the elliptic curve $$y^2 + xy - y = x^3 - x^2 + x.$$
\end{example}
\begin{example}
For $(a,b,c)=(-1,-1,-2)$ we obtain the sequence that begins
$$1, 1, 2, 2, 2, -1, -7, -20, -37, -53, -40, 49, 301, \ldots$$ with generating function
$$\frac{1}{1-x-x^2}c\left(\frac{-x^3}{(1-x-x^2)^2}\right)=\frac{-1+x+x^2+\sqrt{1-2x-x^2+6x^3+x^4}}{2x^3}.$$
This sequence has a Hankel transform that begins
$$1, 1, -2, -3, -7, 5, 32, 83, 87, -821,\ldots,$$, which is a $(1,-1)$ Somos $4$ sequence.
\end{example}
\begin{example} We take $(a,b,c)=(1,2,-1)$. Thus we obtain the sequence that begins
$$1, -1, -1, 4, -4, -5, 23, -28, -28, 164, -232, -166, \ldots$$ with generating function
$$\frac{1}{1+x+2x^2}c\left(\frac{x^3}{(1+x+2x^2)^2}\right)=\frac{1+x+2x^2-\sqrt{1+2x+5x^2+4x^4}}{2x^3}.$$
This sequence has a Hankel transform that begins
$$1, -2, 1, 9, -16, -73, -145, 1442, 3951, -49121,\ldots.$$
This is a $(1,2)$ Somos $4$ sequence, essentially \seqnum{A178075}.
\end{example}
\begin{example} We let $(a,b,c)=(-1,-2,-1)$. We obtain the sequence that begins
$$1, 1, 3, 6, 14, 33, 79, 194, 482, 1214, 3090, 7936, 20544, \ldots$$ with generating function
$$\frac{1}{1-x-2x^2}c\left(\frac{x^3}{(1-x-2x^2)^2}\right)=\frac{1-x-2x^2-\sqrt{1-2x-3x^2+4x^4}}{2x^3}.$$
This sequence has a Hankel transform that begins
$$1, 2, 1, -7, -16, -57, -113, 670, 3983, 23647,\ldots.$$
This is a $(1,-2)$ Somos $4$ sequence, essentially \seqnum{A178622}, which is associated with the elliptic curve
$y^2-3xy-y=x^3-x$.
We note further that the sequence that begins
$$1, 2, 1, 1, 3, 6, 14, 33, 79, 194, 482, 1214, 3090, 7936, 20544, \ldots$$ or \seqnum{A025243} counts the number of Dyck $(n-1)$-paths that contain no $DUDU$'s and no $UUDD$'s for $n \ge 3$.
\end{example}
\section{Elliptic pseudo-involutions}
In this section, we consider the methods outlined above, as applied to a particular one parameter family of elliptic curves. We obtain a result concerning what may be called ``elliptic'' pseudo-involutions in the Riordan group, as each such pseudo-involution is associated in a unique way with an elliptic curve of the type discussed below.
Prior to this, we need to establish the following result.
\begin{proposition} The generating function
$$g(x)=\frac{1}{1-ax-bx^2}c\left(\frac{-x^2(b+cx)}{(1-ax-bx^2)^2}\right)$$ is involutory.
\end{proposition}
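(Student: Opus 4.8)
The plan is to reduce the pseudo-involution property to a single scalar functional equation and then to verify that equation using the algebraic relation that $g$ inherits from the Catalan generating function $c(x)$. Recall that $g$ is involutory precisely when $(g(x),-xg(x))^2=(1,x)$. Expanding this square with the Riordan product rule $(g,f)\cdot(u,v)=(g\cdot u(f),v(f))$, the first component is $g(x)\,g(-xg(x))$ and the second is $xg(x)\,g(-xg(x))$, so both components collapse to the same requirement
$$g(x)\,g(-xg(x))=1.$$
Hence it suffices to establish this one identity.

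First I would extract the algebraic equation satisfied by $g$. Writing $D=1-ax-bx^2$ and $w=\dfrac{-x^2(b+cx)}{D^2}$, we have $g=c(w)/D$, so $c(w)=Dg$. Substituting this into the Catalan relation $c(w)=1+w\,c(w)^2$ and using $wD^2=-x^2(b+cx)$ clears all denominators and yields the quadratic
$$x^2(b+cx)\,g(x)^2+(1-ax-bx^2)\,g(x)-1=0. \qquad (\star)$$
This identity is the workhorse of the argument. Because $(\star)$ holds as an identity in $x$, I may substitute any power series $\phi$ with $\phi(0)=0$ for $x$; taking $\phi=t:=-xg(x)$ shows that $Y=g(-xg(x))$ is a root of
$$t^2(b+ct)\,Y^2+(1-at-bt^2)\,Y-1=0. \qquad (\dagger)$$

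The key computation is then to show that $Y=1/g(x)$ is \emph{also} a root of $(\dagger)$. Substituting $Y=1/g(x)$, multiplying through by $g(x)^2$, and replacing $t=-xg(x)$ (so that $t^2=x^2g(x)^2$ and $ct=-cxg(x)$) collapses $(\dagger)$, after dividing out one factor of $g(x)$, back to exactly $(\star)$. Thus both $g(-xg(x))$ and $1/g(x)$ satisfy the same quadratic $(\dagger)$. To finish, I would note that at $x=0$ the leading coefficient $t^2(b+ct)$ of $(\dagger)$ vanishes while $\partial/\partial Y$ of the linear part equals $1$, so by the implicit function theorem for formal power series $(\dagger)$ has a \emph{unique} power series solution taking the value $1$ at the origin. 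Since $g(-xg(x))$ and $1/g(x)$ both equal $1$ at $x=0$, they are that same solution, giving $g(x)\,g(-xg(x))=1$.

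I expect the genuinely delicate point to be this last branch-selection step: making precise that the two power series must pick out the same root of $(\dagger)$, which is what the degeneration of the quadratic at $x=0$ secures. By contrast, the reduction of $(\dagger)$ back to $(\star)$ under the substitution $t=-xg(x)$, though it is the computational heart of the proof, is a routine algebraic identity that falls out cleanly once $(\star)$ is in hand.
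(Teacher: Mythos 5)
Your proof is correct, but it takes a genuinely different route from the paper's. The paper verifies $(g(x),-xg(x))^{-1}=(g(x),-xg(x))$ in two stages: it first claims that $\text{Rev}(-xg(x))=-xg(x)$, justified only by the remark that one solves $\frac{-u}{1-au-bu^2}c\left(\frac{-u^2(b+cu)}{(1-au-bu^2)^2}\right)=x$ and takes the branch with $u(0)=0$, and then deduces $g(x)g(-xg(x))=1$ from that self-inverseness via the composition chain $x=(\text{Rev}(-xg(x)))(-xg(x))=(-xg(x))(-xg(x))=xg(x)g(-xg(x))$. You instead square the array directly, observe that both components of the square reduce to the single scalar identity $g(x)g(-xg(x))=1$, and prove that identity algebraically: the Catalan relation $c(z)=1+zc(z)^2$ yields your quadratic $(\star)$, substitution of $t=-xg(x)$ gives $(\dagger)$, a short computation shows $1/g(x)$ satisfies $(\dagger)$ as well, and the formal implicit function theorem (the quadratic degenerates at $x=0$ with linear coefficient $1$) forces the two power series roots with value $1$ at the origin to coincide. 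The trade-off is clear: the paper's argument is shorter on the page, but its crucial first step --- the reversion identity --- is asserted rather than proved, and the composition argument depends on it; your route is fully self-contained, avoids computing any reversion, and makes the branch selection rigorous. Indeed, the algebra you perform in showing that $1/g(x)$ satisfies $(\dagger)$ is essentially the computation one would need to make the paper's sketched step precise, and your reduction shows that the self-inverseness of $-xg(x)$ under reversion is a consequence of, rather than a prerequisite for, the identity $g(x)g(-xg(x))=1$.
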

\begin{proof} We must establish that
$$(g(x), -xg(x))^{-1}=(g(x), - xg(x)).$$
Now $$(g(x), -xg(x))^{-1} = \left(\frac{1}{g\left(\text{Rev}(-xg(x))\right)}, \text{Rev}(-xg(x))\right).$$
Thus a first requirement is to show that
$$\text{Rev}(-xg(x))=-xg(x).$$
This follows from solving the equation
$$\frac{-u}{1-au-bu^2}c\left(\frac{-u^2(b+cu)}{(1-au-bxu^2)^2}\right)=x,$$ where we take the solution that satisfies $u(0)=0$.
We next require that
$$g(x) = \frac{1}{g\left(\text{Rev}(-xg(x))\right)},$$ or that
$$g(x) g(-xg(x))=1.$$
Equivalently, we must show that
$$xg(x) g(-xg(x)) =x.$$
Now
\begin{align*}
x &=(\text{Rev}(-xg(x)))(-xg(x))\\
&=(-xg(x))(-xg(x))\\
&= xg(x)g(-xg(x)).\end{align*}
\end{proof}

\begin{proposition}
The elliptic curve
$$E: y^2-a xy-y = x^3-x$$ defines a pseudo-involution $(g(x), xg(x))$ in the Riordan group whose $B$-sequence is given by $$B(x)=\frac{2-a+(1-3a+a^2)x}{1+(1-a)x}.$$
\end{proposition}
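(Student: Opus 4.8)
The plan is to run, for a general parameter $a$, the six-step passage of Section 5 that leads from the curve $E$ to a power series $g(x)$, and then to identify the output with the involutory series of the preceding Proposition, namely $g(x)=\frac{1}{1-a_Bx-b_Bx^2}c\big(\frac{-x^2(b_B+c_Bx)}{(1-a_Bx-b_Bx^2)^2}\big)$, for the parameter values $(a_B,b_B,c_B)=(2-a,\,1-a,\,-a^2+3a-1)$. Recording that $-c_B=1-3a+a^2$, the Proposition of Section 2 then identifies this $g$ as the Bell pseudo-involution whose $B$-sequence is $\frac{a_B-c_Bx}{1+b_Bx}=\frac{(2-a)+(1-3a+a^2)x}{1+(1-a)x}$, exactly as claimed, while the preceding Proposition guarantees it is genuinely involutory. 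So the entire content reduces to verifying that the construction returns this particular $g$.

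First I would solve $y^2-(ax+1)y-(x^3-x)=0$ on the branch with $y(0)=0$, obtaining $\sigma(x)=\tfrac12\big(1+ax-\sqrt{Q}\big)$ with $$Q(x)=(1+ax)^2+4x^3-4x=1+(2a-4)x+a^2x^2+4x^3.$$ Deleting the first two coefficients, i.e. forming $\phi=(\sigma-x)/x^2$, and then building $\frac{1}{1-x-x^2\phi}$ as in Section 5, the two halves combine and collapse to $\psi(x)=\frac{2}{1-ax+\sqrt{Q}}$; multiplying by $x$ and rationalising gives the clean form $x\psi(x)=\frac{-1+ax+\sqrt{Q}}{2(x^2+a-1)}$.

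The step that looks hardest, the reversion of $x\psi$, in fact has a closed form. Writing $x=h(R)$ with $h=x\psi$, isolating the radical as $\sqrt{Q(R)}=2x(R^2+a-1)+(1-aR)$ and squaring, one finds that $Q(R)-(1-aR)^2=4R(R^2+a-1)$, so the common factor $(R^2+a-1)$ cancels and the whole relation reduces to the single quadratic $$R=x^2(R^2+a-1)+x(1-aR).$$ Solving this on the branch $R(0)=0$ yields $$R(x)=\frac{1+ax-\sqrt{P(x)}}{2x^2},\qquad P(x)=(1+ax)^2-4x^2\big(x+(a-1)x^2\big)=1+2ax+a^2x^2-4x^3+(4-4a)x^4,$$ which specialises correctly to the quartics found for $a=3$ and $a=2$ in Section 5. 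This cancellation is the key simplification that makes the computation tractable.

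It then remains to form $f(x)=\frac{1}{1-xR(x)}=\frac{2x}{\sqrt{P}-1+(2-a)x}$ and to apply the Section 5 transform, which for general $a$ reads $g(x)=\big(\frac{1}{1+(a-1)x},\frac{-x}{1+(a-1)x}\big)\cdot\frac{f(x)(1+(a-1)x)-1}{f(x)\,x\,(ax+a-1)}$. A short calculation shows the transported function simplifies, the factor $\sqrt{P}-1+(2-a)x$ cancelling between numerator and denominator, to $H(x)=\frac{1+ax+2(a-1)x^2-\sqrt{P}}{2x^2(ax+a-1)}$, so that by the Fundamental Theorem $g(x)=\frac{1}{1+(a-1)x}\,H\big(\frac{-x}{1+(a-1)x}\big)$. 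The main obstacle is precisely this last substitution: one must push $w=\frac{-x}{1+(a-1)x}$ through $P$ and the surrounding rational factors and check that $\sqrt{P(w)}$ reorganises into $\sqrt{S(x)}$ with $$S(x)=1+(2a-4)x+(a^2-6a+6)x^2+(-2a^2+6a)x^3+(a^2-2a+1)x^4,$$ the radicand attached to the parameters $(2-a,1-a,-a^2+3a-1)$ in the preceding Proposition. Granting this, $g(x)=\frac{2}{1-(2-a)x-(1-a)x^2+\sqrt{S(x)}}$ is exactly the involutory series sought, which completes the proof; the worked cases $a=3$ and $a=2$ of Section 5 (where one checks, for instance, $S(x)=1+2x-3x^2+4x^4$ at $a=3$) serve as a check on the branch choices and on this final radical bookkeeping.
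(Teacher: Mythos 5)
Your proposal is correct and is essentially the paper's own proof: the same six-step pipeline (solve the quadratic in $y$, strip two coefficients, form $\frac{1}{1-x-x^2\phi}$, revert, build $f$, apply the Riordan-array transform), ending in the same identification of the Catalan form with parameters $(2-a,\,1-a,\,-(1-3a+a^2))$ and hence the stated $B$-sequence. The differences are only in level of detail: you supply justifications the paper merely asserts — the quadratic trick for the closed-form reversion, the cancellation giving $H(x)$, and the final radical check $\sqrt{P(w)}=\sqrt{S(x)}/(1+(a-1)x)^2$ (which does verify, yielding $g(x)=\frac{2}{1-(2-a)x-(1-a)x^2+\sqrt{S(x)}}$, the correct Catalan form) — and your intermediate $\frac{2}{1-ax+\sqrt{Q}}$ is right where the paper's printed proof has the typo $\frac{2}{\sqrt{Q}}$.
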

\begin{proof}
We solve the quadratic (in $y$) given by
$$ y^2-a xy-y = x^3-x$$ to obtain
$$y=\frac{1+ax-\sqrt{1+2(a-2)x+a^2x^2+4x^3}}{2}.$$
This expands to a sequence that begins
$$0,1,1-a,1-3a+a^2,\ldots.$$
We strip out the first two terms, giving the generating function
$$\left(\frac{1+ax-\sqrt{1+2(a-2)x+a^2x^2+4x^3}}{2}-x\right)/x^2=\frac{1-(2-a)x-\sqrt{1+2(a-2)x+a^2x^2+4x^3}}{2x^2}.$$
We now form the fraction
$$\frac{1}{1-x-x^2\left(\frac{1-(2-a)x-\sqrt{1+2(a-2)x+a^2x^2+4x^3}}{2x^2}\right)}, $$ which simplifies to give
$$\frac{2}{\sqrt{1+2(a-2)x+a^2x^2+4x^3}}.$$
We now revert the generating function $\frac{2x}{\sqrt{1+2(a-2)x+a^2x^2+4x^3}}$ and divide the result by $x$ to get
$$\frac{1+ax-\sqrt{1+2ax+a^2x^2-4x^3+4(1-a)x^4}}{2x^3}.$$
We let
$$f(x)=\frac{1}{1-x^2\left(\frac{1+ax-\sqrt{1+2ax+a^2x^2-4x^3+4(1-a)x^4}}{2x^3}\right)},$$ or
$$f(x)=\frac{2x}{\sqrt{1+2ax+a^2x^2-4x^3+4(1-a)x^4}+(2-a)x-1}.$$
Finally, we form
$$g(x)=\left(\frac{1}{1+(a-1)x}, \frac{-x}{1+(a-1)x}\right)\cdot \frac{f(x)(1+(a-1)x)-1}{xf(x)(ax+a-1)}.$$
This gives us
$$g(x)=\frac{\sqrt{1+2(a-2)x+(a^2-6a+6)x^2+2a(3-a)x^3+(a-1)^2x^4}+(1-a)x^2+(2-a)x-1}{2x^2((a^2-3a+1)x+a-1)}.$$
This can now be put in the form
$$g(x)=\frac{1}{1-(2-a)x-(1-a)x^2}c\left(\frac{-x^2((1-a)-(1-3a+a^2)x)}{(1-(2-a)x-(1-a)x^2)^2}\right).$$
Comparing this with $$\frac{1}{1-\alpha x - \beta x^2}c\left(\frac{-x^2(\beta+\gamma x)}{(1-\alpha x - \beta x^2)^2}\right)$$ we see that this shows that $g(x)$ is an involutory generating function associated with the $B$-sequence given by
$$B(x)=\frac{2-a+(1-3a+a^2)x}{1+(1-a)x}.$$
\end{proof}
The $B$ sequence with generating function $B(x)=\frac{2-a+(1-3a+a^2)x}{1+(1-a)x}$ begins
$$2 - a, -1, -(a-1), - (a - 1)^2, -(a - 1)^3, - (a - 1)^4, -(a - 1)^5, - (a - 1)^6, -(a - 1)^7, - (a - 1)^8, \ldots.$$
The sequence $g_n$ begins
$$1, 2 - a, a^2 - 4a + 4, - a^3 + 6a^2 - 12a + 7, a^4 - 8a^3 + 24a^2 - 29a + 10,\ldots,$$ and it has a Hankel transform $|g_{i+j}|_{0 \le i,j \le n}$ which begins
$$1, 0, -1, -1, 1 - a, - a^2 + 3a - 1,\ldots.$$
The sequence $$1,1,a-1,a^2-3a+1,-a^3+4a^2-6a+2,\ldots$$ is in fact the Hankel transform of the sequence whose generating function is $f(x)$. This Hankel transform is a $(1,1-a)$ Somos $4$ sequence \cite{Gen, Conj}.

\begin{center}
\begin{tabular} {|c|c|c|}
\hline $a$ & $b_n$ & $g(x)$ \\\hline
 $0$ & $2,-1,1,-1,1,-1,\ldots$ & $\frac{\sqrt{1-4x+6x^2+x^4}+x^2+2x-1}{2x^2(1-x)}$ \\\hline
 $1$ & $1,-1,0,0,0,\ldots$ & $\frac{\sqrt{1-2x+x^2+4x^3}+x-1}{2x^3}$  \\\hline
 $2$ & $0,-1,-1,-1,\ldots$ & $\frac{\sqrt{1-2x^2+4x^3+x^4}-x^2-1}{2x^2(x-1)}$  \\\hline
 $3$ & $-1,-1,-2,-4,-8,\ldots$ & $\frac{1+x+2x^2-\sqrt{1+2x-3x^2+4x^4}}{2x^2(x+2)}$ \\\hline
 $4$ & $-2,-1,-3,-9,-27,\ldots$ & $\frac{1+2x+3x^2-\sqrt{1+4x-2x^2-8x^3+9x^4}}{2x^2(5x+3)}$  \\\hline
 $5$ & $-3,-1,-4,-16,\ldots$ & $\frac{1+3x+4x^2-\sqrt{1+6x+x^2-20x^3+16x^4}}{2x^2(11x+4)}$ \\\hline
\end{tabular}
\end{center}

\begin{example} We take the case of $a=-3$. Thus we start with the elliptic curve
$$E: y^2+3xy-y=x^3-x.$$
We find that
$$g(x)=\frac{\sqrt{1-10x+33x^2-36x^3+16x^4}+4x^2+5x-1}{2x^2(4-19x)},$$ which expands to give the sequence $g_n$ that begins
$$1, 5, 25, 124, 610, 2979, 14457, 69784, 335330, 1605334, 7662014,\ldots.$$
The corresponding Riordan pseudo-involution then begins
$$\left(
\begin{array}{cccccccc}
 1 & 0 & 0 & 0 & 0 & 0 & 0 & 0 \\
 5 & 1 & 0 & 0 & 0 & 0 & 0 & 0 \\
 25 & 10 & 1 & 0 & 0 & 0 & 0 & 0 \\
 124 & 75 & 15 & 1 & 0 & 0 & 0 & 0 \\
 610 & 498 & 150 & 20 & 1 & 0 & 0 & 0 \\
 2979 & 3085 & 1247 & 250 & 25 & 1 & 0 & 0 \\
 14457 & 18258 & 9300 & 2496 & 375 & 30 & 1 & 0 \\
 69784 & 104580 & 64512 & 21755 & 4370 & 525 & 35 & 1 \\
\end{array}
\right),$$ which has a production matrix that begins
$$\left(
\begin{array}{ccccccc}
 5 & 1 & 0 & 0 & 0 & 0 & 0 \\
 0 & 5 & 1 & 0 & 0 & 0 & 0 \\
 -1 & 0 & 5 & 1 & 0 & 0 & 0 \\
 5 & -1 & 0 & 5 & 1 & 0 & 0 \\
 -21 & 5 & -1 & 0 & 5 & 1 & 0 \\
 84 & -21 & 5 & -1 & 0 & 5 & 1 \\
 -326 & 84 & -21 & 5 & -1 & 0 & 5 \\
\end{array}
\right).$$ For this case, we have
$$B(x)=\frac{5+19x}{1+4x}.$$
The Hankel transform of $g_n$ begins
$$1, 0, -1, -1, 4, -19, -83, -1112, 12171,\ldots$$ corresponding to the $(1,4)$ Somos $4$ sequence that begins
$$1, 1, -4, 19, 83, 1112, -12171,\ldots.$$
\begin{figure}
\begin{center}
\includegraphics[height=80mm,width=0.7\textwidth]{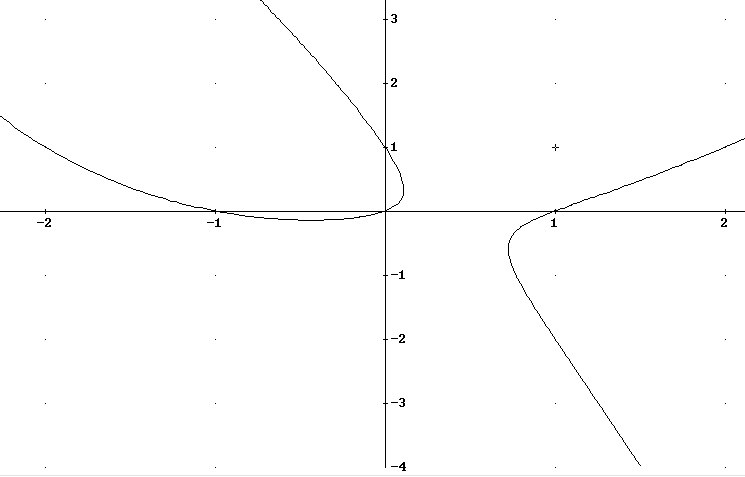}
\end{center}
\caption{The elliptic curve $y^2+3xy-y=x^3-x$}
\end{figure}
\end{example}
\begin{example} When $a=0$, we find that
$$g(x)=\left(\frac{1}{1-2x-x^2}, \frac{x^2(x-1)}{(1-2x-x^2)^2}\right)\cdot c(x).$$
This expands to give the sequence $g_n$ that begins
$$1, 2, 4, 7, 10, 9, -6, -53, -151, -284, -301, 278, 2482, 7717, \ldots.$$  This sequence has a Hankel transform that begins
$$1, 0, -1, -1, 1, -1, -2, 1, 3, 5,\ldots,$$
where the sequence $$1, 1, -1, 1, 2, -1, -3, -5,\ldots$$
which is \seqnum{A006769} is the elliptic divisibility sequence \cite{Ward} associated with the elliptic curve
$$E: y^2-y=x^3-x.$$
This is a $(1,1)$ Somos $4$ sequence. It is the Hankel transform of the expansion of
$$f(x)=\frac{2x}{\sqrt{1-4x^3+4x^4}+2x-1}.$$
This expansion begins
$$1, 0, 1, -1, 1, -1, 0, 0, 0, -2, 4, -4, -1, 11, -16,\ldots.$$
\end{example}
We note finally that the generating function
$$g(x)=\frac{1}{1-(2-a)x-(1-a)x^2}c\left(\frac{-x^2((1-a)-(1-3a+a^2)x)}{(1-(2-a)x-(1-a)x^2)^2}\right)$$
can be put in the form of the continued fraction \cite{CFT, Wall}
$$
\cfrac{1}{1+(a-2)x+(a-1)x^2-
\cfrac{x^2(a-1+(1-3a+a^2)x)}{1+(a-2)x+(a-1)x^2-\cdots}}.$$
\section{Conclusions}
In this note, we have exhibited a three parameter family of involutory generating functions defined by the $B$-sequence with generating function
 $$B=\frac{a-cx}{1+bx}.$$ A special feature of this family is that, via Hankel transforms, it is closely linked to Somos $4$ sequences. In turn, these Somos sequences are linked to elliptic curves. We have shown that it is possible in certain circumstances to start with an elliptic curve, and by a sequence of transformations, arrive at an involutory power series. In particular, we have shown that the one-parameter family of elliptic curves $E: y^2- a xy-y=x^3-x$ gives rise to a corresponding family of Bell pseudo-involutions in the Riordan group.
\section{Acknowledgement}
Many of the techniques used in this paper are based on investigations into elliptic curves and the fascinating Somos sequences, themselves originating in the elliptic divisibility sequences \cite{Ward}, and further elaborated by Michael Somos, whose creative mathematics and many relevant contributions to the Online Encyclopedia of Integer Sequences \cite{SL1, SL2} have been inspirational.

\bigskip
\hrule
\bigskip
\noindent 2010 {\it Mathematics Subject Classification}: Primary
11B83; Secondary 11C20, 11B37, 14H52, 15B05, 15B36.
\noindent \emph{Keywords:} Riordan array, Riordan pseudo-involution, B-sequence, A-sequence, elliptic curve, Somos sequence, recurrence,  Hankel transform.

\bigskip
\hrule
\bigskip
\noindent (Concerned with sequences
\seqnum{A000108},
\seqnum{A000245},
\seqnum{A004148},
\seqnum{A007477},
\seqnum{A006196},
\seqnum{A006769},
\seqnum{A023431},
\seqnum{A025227},
\seqnum{A025243},
\seqnum{A025250},
\seqnum{A025258},
\seqnum{A025273},
\seqnum{A050512},
\seqnum{A060693},
\seqnum{A068875},
\seqnum{A086246},
\seqnum{A089796},
\seqnum{A090181},
\seqnum{A091561},
\seqnum{A091565},
\seqnum{A105633},
\seqnum{A130749},
\seqnum{A152225},
\seqnum{A178075},
\seqnum{A178075},
\seqnum{A178622},
\seqnum{A178622},
\seqnum{A178627},
\seqnum{A187256}, and
\seqnum{A217333}.)

\end{document}